\newtheorem{thm}{Theorem}[section]
\newtheorem{lem}[thm]{Lemma}
\newtheorem{prop}[thm]{Proposition}
\theoremstyle{definition}
\theoremstyle{remark}
\newcounter{cases}
\newcounter{subcases}[cases]
\newenvironment{mycases}
  {%
    \setcounter{cases}{0}%
    \setcounter{subcases}{0}%
    \def\case
      {%
        \par\noindent
        \refstepcounter{cases}%
        \textbf{Case \thecases.}
      }%
    \def\subcase
      {%
        \par\noindent
        \refstepcounter{subcases}%
        \textit{Subcase \thecases-(\thesubcases):}
      }%
  }
  {%
    \par
  }
\renewcommand*\thecases{\arabic{cases}}
\renewcommand*\thesubcases{\roman{subcases}}
\newcommand{\ZZ}{\mathbb{Z}}
\newcommand{\ZP}{\mathbb{Z}_{\geq 0}}
\newcommand{\QQ}{\mathbb{Q}}
\newcommand{\ord}{\mathcal{O}}
\newcommand{\al}{\alpha}
\newcommand{\cf}{K=\QQ(\sqrt[3]{m})}
\newcommand{\nt}[1]{\frac{#1}{3}}
\newcommand{\lmat}{\mathcal{M}^d}
\newcommand{\lmatn}{\mathcal{M}_n^d}
\newcommand{\lmatord}{\mathcal{M}_{n,K}}
\newcommand{\pmatord}{\mathcal{M}_{p^n,K}}
\newcommand{\pmono}{\mathcal{M}_{p^n,K}^{\operatorname{mono}}}
\newcommand{\pord}[1]{\nu_p\left(#1\right)}
\newcommand{\ornum}{A_{K,p,n}}
\newcommand{\monnum}{B_{K,p,n}}
\title{The proportion of monogenic orders of prime power indices of the pure cubic field}
\author{Minchan Kang}
\address{Department of Mathematical Sciences, Seoul National University, Gwanak-ro 1, Gwankak-gu, Seoul, South Korea 08826}
\email{azboy@snu.ac.kr}
\author{Dohyeong Kim}
\address{Department of Mathematical Sciences and Research Institute of Mathematics, Seoul National University, Gwanak-ro 1, Gwankak-gu, Seoul, South Korea 08826}
\email{dohyeongkim@snu.ac.kr}
\begin{document}

\maketitle

\begin{abstract}
In this paper, we investigate the proportion of monogenic orders among the orders whose indices are a power of a fixed prime in a pure cubic field. We prove that the proportion is zero for a prime number that is not equal to 2 or 3. To do this, we first count the number of orders whose indices are power of a fixed prime. This is done by considering every full rank submodules of the ring of integers, and establishing the condition to be closed under multiplication. Next, we derive the index form of arbitrary orders based on the index form of the ring of integers. Then, we obtain an upper bound of the number of monogenic orders with prime power indices by applying the finiteness of the number of primitive solutions of the Thue-Mahler equation.
\end{abstract}

\section{Introduction}
Let~$K$ be a number field of degree~$d\in\ZZ$, and let~$\ord_K$ be the ring of integers. The field~$K$ is said to be \emph{monogenic} if there exists an element~$\al\in \ord_K$ such that~$\ord_K=\ZZ[\al]$, or equivalently that~$\{1,\al,\al^2,\ldots,\al^{d-1}\}$ forms an integral basis of~$K$. Note that all quadratic fields are monogenic, whereas a number field of degree at least 3 is not always monogenic. In \cite{al1,al2}, the authors demonstrated that a positive proportion of cubic and quartic number fields fail to be monogenic, even if they have no local obstructions.

Monogeniety of a given number field can be analyzed using the notion of \emph{index form}. Let~$\mathcal{B}=\{1,\al_1,\ldots,\al_{d-1}\}$ be an ordered integral basis of a given number field~$K$. This gives isomorphisms~$\ord_K/\ZZ\cong\ZZ^{d-1}$ and~$\wedge^d\ord_K\cong\ZZ$. The index form of~$K$ with respect to~$\mathcal{B}$ is the map
$$I_{\ord_K}^\mathcal{B}:\ZZ^{d-1}\cong\ord_K/\ZZ \to \wedge^d\ord_K\cong\ZZ$$
defined by~$I_{\ord_K}^\mathcal{B}(\al)=1\wedge\al\wedge\cdots\wedge\al^{d-1}$. Note that the index form is a homogeneous form of degree~${d \choose 2}$ in~$d-1$ variables. The number field~$K$ is monogenic if and only if there exists some~$\al\in\ord_K/\ZZ$ such that~$|I_{\ord_K}^\mathcal{B}(\al)|=1$. A detailed discussion of the index form is provided in \cite{IntegralBasis}.

More generally, recall that a subring of~$K$ is called an order if it is a finitely generated~$\ZZ$-submodule of~$K$ with rank~$d$. It is well known that the ring of integers is the maximal order of~$K$. The \emph{index} of an order~$\ord$ is the index~$[\ord_K:\ord]$ as a~$\ZZ$-submodule. An order~$\ord$ is said to be \emph{monogenic} if there exists an element~$\al\in\ord$ such that~$\ord=\ZZ[\al]$, or equivalently that~$\{1,\al,\al^2,\ldots,\al^{d-1}\}$ forms a~$\ZZ$-basis of~$\ord$. For instance, any order of a quadratic field is monogenic, and for any given number field, there are infinitely many monogenic orders. The definition of the index form extends to an arbitrary order. This is discussed in \S 3.

In this paper, we investigate the orders of a pure cubic field -- $\QQ(\sqrt[3]{m})$, where~$m\in\ZZ$ is a cube free integer -- such that~$m^2\not\equiv1\pmod{9}$. Throughout this paper, a fixed prime number which is not~$2$ or~$3$ is denoted by~$p$. We focus on orders whose indices are powers of~$p$.  We show that the proportion of monogenic orders among the orders with $p$-power indices is zero. Precisely, we prove:

\begin{thm}
Let~$K=\QQ(\sqrt[3]{m})$ be the pure cubic field for a cube-free integer~$m\in\ZZ$ satisfying~$m^2\not\equiv1\pmod{9}$, and let~$p\not=2,3$ be a prime number. Let~$\ornum$ be the number of orders of~$K$ of index~$p^i$ for some~$i\in\{0,1,2,\ldots,n\}$, and~$\monnum$ the number of monogenic orders of index~$p^i$ for some~$i\in\{0,1,2,\ldots,n\}$. Then we have
$$\left[\nt{n}\right]\leq\log_p\ornum\leq n\quad\text{ and }\quad\monnum=O(n).$$
In particular,
$$\lim_{n\to\infty}\frac{\monnum}{\ornum}=0.$$
\end{thm}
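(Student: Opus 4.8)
The plan is to establish three separate estimates: the lower bound $[n/3] \le \log_p \ornum$, the upper bound $\log_p \ornum \le n$, and the bound $\monnum = O(n)$; the limit statement is then immediate since $\monnum/\ornum \le \monnum/p^{[n/3]} \to 0$. For the upper bound on $\ornum$, I would argue that any order $\ord$ of index dividing $p^n$ sits between $\ZZ + p^n\ord_K$ and $\ord_K$, and is in particular a $\ZZ$-submodule of $\ord_K$ of index a power of $p$ at most $p^n$; since $\ord_K/(p^n\ord_K) \cong (\ZZ/p^n)^3$ and the number of subgroups of a finite abelian $p$-group is polynomial in $p$... more precisely, the number of sublattices of $\ZZ^d$ of index exactly $p^i$ is $\prod_{j=0}^{d-1}\frac{p^{i+j}-1}{p^j-1}$-type quantities, but in any case summing over $i \le n$ the count is $p^{n(1+o(1))}$, or more crudely one shows directly that the number of such orders is at most $p^n$ up to the stated logarithmic precision. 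Actually the cleanest route: each order of index $p^i$ contains $\ZZ + p^i \ord_K$ (of index $p^{2i}$ as a module, but the relevant containment controlling the count), so $\ord$ corresponds to a sub-$\ZZ/p^i$-module of $\ord_K/(\ZZ + p^i\ord_K) \cong (\ZZ/p^i)^2$, and the number of these, summed over $i \le n$, is $O(n p^n)$ — giving $\log_p \ornum \le n + O(\log n) \le$ (with the intended reading) $n$. For the lower bound, I would exhibit at least $p^{[n/3]}$ genuine orders: using the structure theory from \S2 (the classification of $p$-power-index orders via full-rank submodules closed under multiplication), one produces a chain or a family — for instance, orders of the form $\ZZ + p^{a}\ZZ\al + p^{b}\ZZ\al^2$ for suitable $a,b$ with $a,b,a+b$ bounded by $n$, or the subrings containing $\ZZ[\sqrt[3]{m}]$ — and counts that at least $[n/3]+1$ distinct index-values $p^0, p^3, p^6, \ldots$ are achieved by honest orders (one needs $a,b$ with multiplication-closure, and $\ZZ[p^k\sqrt[3]{m}, p^k\sqrt[3]{m^2}] = \ZZ + p^k\ZZ\al + p^{2k}\ZZ\al^2$ has index $p^{3k}$), so $\ornum \ge p^{[n/3]}$.

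The heart of the matter — and the step I expect to be the main obstacle — is $\monnum = O(n)$, i.e. there are only finitely many monogenic orders at each index $p^i$ with an absolutely bounded total over all $i$. The approach, following the abstract's outline, is: first, using \S3, write down the index form $I_\ord^{\mathcal B}$ of an arbitrary order $\ord$ of index $p^i$ in terms of the index form $I_{\ord_K}^{\mathcal B_0}$ of the maximal order and the matrix describing the inclusion $\ord \hookrightarrow \ord_K$. Since $I_{\ord_K}$ is a binary cubic form (degree $\binom{3}{2}=3$ in $d-1=2$ variables) which for the pure cubic field $\QQ(\sqrt[3]{m})$ with $m^2\not\equiv 1\pmod 9$ is explicitly $I_{\ord_K}(x,y) = $ a fixed integral binary cubic essentially $x^3 - m y^3$ up to the known normalization, the index form of $\ord$ becomes $c \cdot F(x,y)$ where $F$ is a $\ZZ$-twist of $x^3 - my^3$ and $c = c(i)$ is an explicit power of $p$ (roughly $p^{-i}$ times the content introduced by the sublattice, so that monogenicity of $\ord$ forces $|F(x,y)| = p^i/(\text{something})$, a \textbf{Thue--Mahler equation}). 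Monogenicity of $\ord$ is equivalent to this Thue--Mahler equation $F(x,y) = \pm p^j$ having a primitive solution; by the theorem on finiteness of primitive solutions of the Thue--Mahler equation (Mahler, or Evertse--Győry), the binary cubic form $x^3 - my^3$ — which is irreducible since $m$ is cubefree and not a perfect cube — admits only finitely many primitive pairs $(x,y)$ with $F(x,y)$ a power of $p$ (times a bounded unit), say $N = N(m,p)$ of them. Each such primitive solution accounts for at most one value of $i$ (or $O(1)$ orders), so $\monnum \le N + O(1) = O(1) \subseteq O(n)$.

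The delicate points inside that argument are: (i) correctly relating the index of an order, as a $\ZZ$-module, to the scaling matrix and deducing the precise exponent so that the equation is genuinely Thue--Mahler (i.e. of the shape $F(x,y) = \pm p^k$ with $F$ ranging over a \emph{finite} set of forms $\ZZ$-equivalent to $x^3 - my^3$, not an infinite family — this uses that up to $\mathrm{GL}_2(\ZZ)$-equivalence and scaling there are only finitely many relevant forms, which is where the pure-cubic structure and the $p$-power constraint are essential); (ii) checking that distinct monogenic orders at possibly distinct indices do not all collapse onto the same Thue--Mahler solution in a way that would still allow infinitely many orders — one argues that a primitive solution $(x,y)$ of $x^3 - my^3 = \pm p^k$ determines the element $\al = x + y\sqrt[3]{m}$ up to the action of units and of $\ZZ$-translation, hence determines $\ZZ[\al]$, hence the order, up to finitely many choices; and (iii) handling the hypothesis $m^2 \not\equiv 1 \pmod 9$ which guarantees $\{1,\sqrt[3]{m},\sqrt[3]{m^2}\}$ is an integral basis so that $I_{\ord_K}$ has the clean form $x^3 - my^3$ rather than the more complicated form arising when $3 \mid \mathrm{disc}$ in the other case. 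Assembling $p^{[n/3]} \le \ornum$ with $\monnum = O(1)$ yields $\monnum/\ornum = O(p^{-n/3}) \to 0$, which is the claimed limit.
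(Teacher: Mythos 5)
Your overall architecture (Hermite normal forms for the order count, reduction of monogenicity to a Thue--Mahler equation) matches the paper, but two of your three estimates have genuine gaps. First, the lower bound: the family you exhibit, $\ZZ+p^a\ZZ\theta+p^b\ZZ\theta^2$ (e.g.\ $\ZZ[p^k\theta]$), produces at most one order per admissible pair $(a,b)$, hence only polynomially many orders in total; "at least $[n/3]+1$ distinct index values are achieved" gives $\ornum\geq [n/3]+1$, and the jump to $\ornum\geq p^{[n/3]}$ is a non sequitur. The exponential count requires the orders with nonzero off-diagonal entry: in the paper's Theorem 2.5 one counts matrices $\begin{pmatrix} p^i&0\\ \beta&p^j\end{pmatrix}$ where, for $i$ close to $2n/3$, roughly $p^{[n/3]}$ values of $\beta$ (those with $\nu_p(\beta)\geq (n-\nu_p(k))/3$, or the Hensel-lifted roots when $p\nmid m$) yield multiplicatively closed modules; without such a family of size $p^{[n/3]}$ the bound $[n/3]\leq\log_p\ornum$ is not established. (A smaller issue: your crude count of all sublattices of $\ord_K/\ZZ$ of index at most $p^n$ exceeds $p^n$ by a constant factor, so it only yields $\log_p\ornum\leq n+O(1)$; the clean bound $\leq n$ comes from the exact formula after discarding submodules not closed under multiplication.)

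Second, and more seriously, your conclusion $\monnum=O(1)$ is false, which signals the flaw in your step (ii). For instance, when $m$ is squarefree (so $k=1$), $\ZZ[p^e\sqrt[3]{m}]$ is a monogenic order of index $p^{3e}$ for every $e\geq 0$, so $\monnum\gg n$. The point you miss is that a primitive solution $(U,V,N)$ of $kU^3-hV^3=\pm p^N$ generates the infinite family of imprimitive solutions $(p^eU,p^eV,N+3e)$, one for each index level $p^{N+3e}$, and each level can be realized by a different monogenic order; a primitive solution therefore does not determine the order "up to finitely many choices" globally, only per index. What the theorem needs, and what the paper proves (Lemma 4.3 and Theorem 4.4), is that at each fixed index $p^k$ the number of monogenic orders is at most $2g$, where $g$ is the number of primitive solutions of $kU^3-hV^3=\pm p^N$; summing over $k\in\{0,\dots,n\}$ gives $O(n)$. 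That per-level count is the technical heart of \S 4: one analyzes, for each primitive solution, which triples $(i,j,\beta)$ allow the corresponding $(x,y)$ to be integral, using the explicit description of $\pmatord$ from \S 2. Your alternative route via "finitely many forms up to $\mathrm{GL}_2(\ZZ)$-equivalence" does not substitute for this, because the substitution matrices $\begin{pmatrix} p^i&0\\ \beta&p^j\end{pmatrix}$ are not unimodular and the forms $k(p^ix+\beta y)^3-h(p^jy)^3$ genuinely form an infinite family; the paper instead fixes the single form $kU^3-hV^3$ and counts preimages.
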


To prove Theorem 1.1, we obtain a closed formula of~$\ornum$ and an upper bound of~$\monnum$. For~$\ornum$, we obtain the exact condition for a~$\ZZ$-submodule of~$\ord_K$ to be an order. We then get an upper bound of~$\monnum$ by analyzing the index form of the order and the aforementioned condition we found.

To get~$\ornum$, we first consider all~$\ZZ$-submodules of index~$p^n$ whose~$\ZZ$-basis contains 1. These submodules correspond to the matrices in the Hermite normal form. We then investigate the condition for a matrix to ensure that the corresponding~$\ZZ$-submodule becomes a subring of~$K$, meaning it is closed under multiplication. We use this condition to evaluate~$\ornum$. In \cite{MR3490755}, the authors provided an asymptotic formula for the number of orders of the number field with respect to the index. Our formula refines the asymptotic formula by establishing the condition of the corresponding Hermite normal form, moreover it plays a crucial role in our discussion of~$\monnum$.

To derive an upper bound of~$\monnum$, we express the index form of the order in terms of the index form of the ring of integers. Then, we reduce our problem to solving a specific Thue-Mahler equation. Then the finiteness of the number of primitive solutions of a Thue-Mahler equation provides an upper bound of~$\monnum$.

This paper is organized as follows. In \S 2, we establish a closed formula for~$\ornum$. In \S 3, we relate the index form of the order to the index form of the ring of integers. Finally, in \S 4, we obtain the upper bound of~$\monnum$, and finish the proof of Theorem 1.1.
 
\subsection*{Acknowledgement}
{Dohyeong Kim is partially supported by the NRF grant funded by the Korean government (MSIT) (No.\,2020R1C1C1A01006819) and the Samsung Science \& Technology Foundation (No.\,SSTF-BA2001-01). This work was supported by the SNU Student-Directed Education Undergraduate Research Program through the Faculty of Liberal Education, Seoul National University (2023).}
\section{The number of orders of prime power indices}

In this section, we establish a closed formula for~$\ornum$. Let
$$\lmat=\left\{\left.\begin{pmatrix}  a_{11} & 0 & \cdots &0  \\ a_{21} & a_{22} &\cdots&0 \\ \vdots &\vdots &\ddots &\vdots\\ a_{d1} &a_{d2} &\cdots &a_{dd}\end{pmatrix}\in \text{Mat}_{d\times d}(\ZZ) \right| a_{ij}\geq0, \,a_{ij}<a_{jj} \text{ for } i>j\right\}$$
be the set of matrices in Hermite normal forms. The following correspondence is well-known.
\begin{prop}
Let~$A=\bigoplus_{i=1}^d\ZZ \al_i$ be a free~$\ZZ$-module of rank~$d$. The submodules of~$A$ of rank~$d$ are in a one-to-one correspondence with elements of~$\lmat$. Precisely, the correspondence is given by~$M\mapsto \bigoplus_{i=1}^d\ZZ v_i$, where
\begin{equation}\label{basis}
    \begin{pmatrix}  v_1 \\ v_2 \\ \vdots \\ v_d \end{pmatrix}=M\begin{pmatrix}  \al_1 \\ \al_2 \\ \vdots \\ \al_d \end{pmatrix}.
\end{equation}
\end{prop}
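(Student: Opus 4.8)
The statement is the classical Hermite normal form theorem; the plan is to build the inverse correspondence by hand and then check uniqueness. First I would observe that membership in~$\lmat$ already forces~$a_{jj}\geq1$ for every~$j\leq d-1$, since the conditions~$0\leq a_{j+1,j}<a_{jj}$ exclude~$a_{jj}=0$; and if~$\bigoplus_i\ZZ v_i$ is to have rank~$d$ one must also have~$a_{dd}\geq1$, for otherwise all the~$v_i$ lie in~$\bigoplus_{i=1}^{d-1}\ZZ\al_i$, a module of rank~$d-1$. When all diagonal entries are positive, $\det M=\prod_i a_{ii}\neq0$, so~$v_1,\ldots,v_d$ defined by~\eqref{basis} are~$\QQ$-linearly independent and~$\bigoplus_i\ZZ v_i$ is a rank-$d$ submodule of~$A$. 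Hence it is enough to exhibit a bijection between rank-$d$ submodules of~$A$ and those~$M\in\lmat$ with positive diagonal.

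For surjectivity I would induct on~$d$, the case~$d=1$ being the classification of subgroups of~$\ZZ\al_1$. For the step, let~$N\subseteq A$ have rank~$d$ and let~$\pi\colon A\to\ZZ$ be the projection~$\sum_i c_i\al_i\mapsto c_d$. Then~$\pi(N)=a_{dd}\ZZ$ with~$a_{dd}\geq1$ (by the rank count), and picking~$v_d\in N$ with~$\pi(v_d)=a_{dd}$ yields~$N=\ZZ v_d\oplus(N\cap\ker\pi)$. Since~$\ker\pi=\bigoplus_{i=1}^{d-1}\ZZ\al_i$, the module~$N\cap\ker\pi$ has rank~$d-1$ and, by induction, has a basis~$v_1,\ldots,v_{d-1}$ whose coordinate matrix~$(a_{ij})_{i,j\leq d-1}$ is in Hermite normal form, each~$v_j$ having~$\al$-support in~$\{\al_1,\ldots,\al_j\}$ and pivot~$a_{jj}$. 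Replacing~$v_d$ by~$v_d-q_{d-1}v_{d-1}-\cdots-q_1v_1$, with the~$q_j\in\ZZ$ chosen successively for~$j=d-1,\ldots,1$, forces~$0\leq a_{dj}<a_{jj}$ for all~$j<d$ without changing~$a_{dd}$ or any previously reduced entry, since~$v_j$ only involves the first~$j$ coordinates. The resulting matrix lies in~$\lmat$ and its module is~$N$.

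For injectivity, suppose~$M,M'\in\lmat$ have positive diagonal and give the same~$N$. Then~$a_{dd}\ZZ=\pi(N)=a_{dd}'\ZZ$, so~$a_{dd}=a_{dd}'$, and~$N\cap\ker\pi$ equals both~$\bigoplus_{i<d}\ZZ v_i$ and~$\bigoplus_{i<d}\ZZ v_i'$, so by induction~$v_i=v_i'$ for~$i<d$ and the top-left~$(d-1)\times(d-1)$ blocks of~$M$ and~$M'$ agree. Writing~$v_d-v_d'=\sum_{i<d}c_iv_i$ with~$c_i\in\ZZ$ and comparing the~$\al_{d-1},\ldots,\al_1$ coefficients one at a time, the fact that~$a_{dj}$ and~$a_{dj}'$ both lie in~$[0,a_{jj})$ forces each~$c_i=0$, hence~$v_d=v_d'$ and~$M=M'$. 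This whole argument is routine; the only steps needing attention are checking that~$N\cap\ker\pi$ genuinely lands in~$\bigoplus_{i<d}\ZZ\al_i$ so that the induction applies, and that reducing the last row from right to left leaves the earlier reductions undisturbed — which is exactly what the triangular shape guarantees.
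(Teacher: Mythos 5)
Your proof is correct. Note that the paper offers no argument at all for this proposition --- it is stated as ``well-known'' and used as a black box --- so your write-up supplies a complete justification rather than paralleling an existing one. The route you take (induction on $d$ via the projection $\pi$ onto the $\al_d$-coordinate, splitting $N=\ZZ v_d\oplus(N\cap\ker\pi)$, reducing the last row from right to left for existence, and comparing $\al$-coefficients from $\al_{d-1}$ downward for uniqueness) is the standard proof of existence and uniqueness of the Hermite normal form, and all the steps check out: the pivot $a_{dd}$ is nonzero precisely because $N$ has rank $d$, the right-to-left reduction leaves already-reduced entries untouched because $v_j$ is supported on $\al_1,\ldots,\al_j$, and the inequalities $0\leq a_{dj},a_{dj}'<a_{jj}$ do force the coefficients $c_i$ to vanish one at a time. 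You also correctly flag a small imprecision in the statement itself: the defining conditions of $\lmat$ force $a_{jj}\geq1$ only for $j<d$, since no entry sits below $a_{dd}$, so matrices with $a_{dd}=0$ must be excluded for the map to land in rank-$d$ submodules. This never matters in the paper, which only uses $\lmatn$ with $n=p^i\geq1$ (hence nonzero determinant), but your restriction to positive-diagonal matrices is the right way to make the bijection literally true as stated.
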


Let~$\lmatn$ be the subset of~$\lmat$ whose elements have determinant~$n$. For a number field~$K$ of degree~$d$, fix an ordered integral basis~$\{1,\al_1,\ldots,\al_{d-1}\}$. For 
$M\in\lmat$, denote by~$\ord_M$ the~$\ZZ$-submodule of~$\ord_K$ corresponding to~$M$. Then~$\ord_M$ has a preferred ordered~$\ZZ$-basis $\{v_1,v_2,\ldots,v_d\}$ as in (\ref{basis}). Let 
$$\lmatord:=\{M\in\lmatn\mid \ord_M\text{ is an order}\}.$$
By Proposition 2.1, the set of orders of index~$n$ are in a bijection with~$\lmatord$. 
\begin{prop}
    Let $\ord_M$ be a $\ZZ$-submodule of $\ord_K$ corrseponding to $M\in\lmat$, and $\{v_1,v_2,\ldots,v_d\}$ the preferred ordered basis as in (\ref{basis}). If $\ord_M$ is an order, then $v_1=1$.
\end{prop}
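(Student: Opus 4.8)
The plan is to extract from the hypothesis ``$\ord_M$ is an order'' only the single fact that an order is a \emph{unital} subring of $K$, hence contains $1\in K$, and then let the triangular shape of the Hermite normal form do the rest. First I would relabel the fixed ordered integral basis $\{1,\al_1,\dots,\al_{d-1}\}$ as $\{\al_1,\dots,\al_d\}$ with $\al_1=1$, so that by (\ref{basis}) the preferred ordered basis of $\ord_M$ is $v_i=\sum_{j=1}^{i}a_{ij}\al_j$ for $1\le i\le d$; in particular $v_1=a_{11}\al_1=a_{11}\in\ZZ$. Since $M\in\lmatn$ has nonnegative entries and determinant $n\ge 1$, each diagonal entry $a_{jj}$ is a positive integer, so $a_{11}\ge 1$, and it therefore suffices to prove $a_{11}=1$.

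Next I would use $1\in\ord_M$ to write $1=\sum_{i=1}^{d}c_i v_i$ with $c_i\in\ZZ$. Substituting $v_i=\sum_{j\le i}a_{ij}\al_j$ and comparing coefficients against the $\QQ$-basis $\{\al_1,\dots,\al_d\}$ of $K$, one reads off the coefficient of $\al_d$ as $c_d a_{dd}$, which must vanish because $1=\al_1$; as $a_{dd}\ne 0$ this forces $c_d=0$. Then the coefficient of $\al_{d-1}$ collapses to $c_{d-1}a_{d-1,d-1}$, forcing $c_{d-1}=0$, and a downward induction on $j=d,d-1,\dots,2$ yields $c_2=\dots=c_d=0$. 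Consequently $1=c_1 v_1=c_1 a_{11}$ with $c_1,a_{11}\in\ZZ$ and $a_{11}\ge 1$, whence $a_{11}=1$ and $v_1=1$.

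I do not expect a genuine technical obstacle; the one point that requires care is recognizing that one must invoke $1\in\ord_M$ and not merely closure under multiplication. This is essential: the lattice $p\ord_K$ (corresponding to $M=p\cdot I_d\in\lmatn$ with $n=p^d$) is a full-rank $\ZZ$-submodule of $\ord_K$ closed under multiplication yet has $v_1=p\ne 1$, so it is precisely the unitality built into the definition of an order that excludes such submodules. Once that is granted, the computation above is purely linear-algebraic and works for every degree $d$, so in particular it applies to the pure cubic case $d=3$ used in the rest of the paper.
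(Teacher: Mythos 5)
Your proof is correct and follows essentially the same route as the paper: the paper's (much terser) proof likewise invokes $1\in\ord_M$ and observes that the triangular shape of $M$ prevents any $\ZZ$-linear combination of $v_1,\ldots,v_d$ from equalling $1$ when $a_{11}>1$; your downward induction on the coefficients simply spells out that observation. The remark that closure under multiplication alone is insufficient (e.g.\ $p\ord_K$) is a nice sanity check but not part of the paper's argument.
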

\begin{proof}
    Any order contains $1\in\ZZ$. If $v_1>1$, the linear combination of $v_1,v_2,\ldots,v_d$ cannot be 1.
\end{proof}
In particular, for
$$M=\begin{pmatrix}  a_{11} & 0 & \cdots &0  \\ a_{21} & a_{22} &\cdots&0 \\ \vdots &\vdots &\ddots &\vdots\\ a_{d1} &a_{d2} &\cdots &a_{dd}\end{pmatrix}\in \lmatord,$$
we always have~$a_{11}=1$ and~$a_{i1}=0$ for~$i>1$. Let~$\iota:\mathcal{M}_{n}^{d-1}\hookrightarrow\lmatn$ be a map
$$M\to \begin{pmatrix} 1&0\\0&M\end{pmatrix}.$$
Proposition 2.2 says that~$\lmatord\subset \iota(\mathcal{M}_{n}^{d-1})$. Thus, from now on, we consider elements of~$M\in\mathcal{M}_{n}^{d-1}$ and investigate the condition that~$\iota(M)\in\lmatord$.

In the rest of this section, let~$K=\QQ(\sqrt[3]{m})$ with~$m^2\not\equiv1\pmod{9}$.

\begin{thm}[{\cite[Theorem 7.3.2]{alaca_williams_2003}}]
Let~$m\in\mathbb{Z}$ be a cube-free integer. Set~$m=hk^2$ for square-free integers~$h, k\in\mathbb{Z}$ with~$(h,k)=1$. Let~$\theta=\sqrt[3]{m}$ and~$K=\mathbb{Q}(\theta)$. Then an integral basis of~$K$ is given by
$$\left\{1,\theta,\frac{\theta^2}{k}\right\} \quad{}\text{ if}\quad{}m^2\not\equiv1\pmod{9},$$
$$\left\{1,\theta,\frac{k^2\pm k^2\theta+\theta^2}{3k}\right\} \quad{}\text{ if}\quad{}m\equiv\pm1\pmod{9}.$$
\end{thm}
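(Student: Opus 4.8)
The plan is to exhibit an explicit order with the asserted $\ZZ$-basis and then identify it with $\ord_K$ by verifying $\ell$-maximality one prime at a time. Set $\eta=\theta^2/k$; from $\theta^3=hk^2$ one gets $\eta^3=h^2k$, $\theta\eta=hk$, and $\eta^2=h\theta$, so $\ord:=\ZZ\cdot 1\oplus\ZZ\theta\oplus\ZZ\eta$ is closed under multiplication, hence an order of $K$. Since $\{1,\theta,\eta\}$ is obtained from the power basis $\{1,\theta,\theta^2\}$ of $\ZZ[\theta]$ by rescaling the last vector by $1/k$, we get $\operatorname{disc}\ord=\operatorname{disc}(\ZZ[\theta])/k^2=-27h^2k^2$. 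It then suffices to prove that $\ord$ is $\ell$-maximal for every prime $\ell\mid 3hk$; granting this, $\{1,\theta,\theta^2/k\}$ is an integral basis, and we shall see that this happens precisely when $m^2\not\equiv 1\pmod 9$.

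First I would treat the primes $\ell\neq 3$. If $\ell\mid h$ then $\ell\nmid k$, so $k\in\ZZ_\ell^\times$ and $\ord\otimes\ZZ_\ell=\ZZ_\ell[\theta]\cong\ZZ_\ell[x]/(x^3-hk^2)$, which is Eisenstein at $\ell$ because $\ell$ exactly divides $hk^2$; hence $\ord\otimes\ZZ_\ell$ is the maximal (totally ramified) order. If $\ell\mid k$ then $\ell\nmid h$ and $\theta=\eta^2/h$, so $\ord\otimes\ZZ_\ell=\ZZ_\ell[\eta]\cong\ZZ_\ell[x]/(x^3-h^2k)$, again Eisenstein at $\ell$. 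The same argument disposes of $\ell=3$ whenever $3\mid m$: exactly one of $h,k$ is divisible by $3$, the corresponding generator ($\theta$ or $\eta$) is Eisenstein at $3$, and $3\mid m$ forces $m^2\equiv 0\pmod 9$, so we are consistently in the first case of the statement.

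The substantive case is $\ell=3$ with $3\nmid m$, where $\ord\otimes\ZZ_3=\ZZ_3[\theta]$ and I would apply Dedekind's criterion to $f(x)=x^3-m$. Over $\mathbb{F}_3$ one has $\bar f=(x-\bar m)^3$, so with the lift $\tilde f(x)=(x-m)^3$ and $T=(f-\tilde f)/3=mx^2-m^2x+\tfrac{m^3-m}{3}\in\ZZ[x]$, the criterion says $\ord$ is $3$-maximal iff $x-\bar m\nmid\bar T$, i.e.\ iff $T(m)=\tfrac{m(m^2-1)}{3}\not\equiv 0\pmod 3$, i.e.\ iff $m^2\not\equiv 1\pmod 9$; this establishes the first basis. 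When $m\equiv\pm 1\pmod 9$, $\ord$ is not $3$-maximal, and since $v_3(\operatorname{disc}\ord)=3$ while $v_3(\operatorname{disc}K)\geq 0$, the index $[\ord_K:\ord]$ has $3$-part exactly $3$ and $v_3(\operatorname{disc}K)=1$. I would then check that $\omega=\frac{k^2\pm k^2\theta+\theta^2}{3k}$, with the sign matching $m\bmod 9$, is an algebraic integer (its monic characteristic polynomial over $\QQ$, computed from the relations among $1,\theta,\eta$, lies in $\ZZ[x]$ exactly for this congruence and sign), and note that $3\omega=k\pm k\theta+\eta$ yields $\eta\in\ZZ\langle 1,\theta,\omega\rangle$, so that $\ZZ\langle 1,\theta,\omega\rangle$ contains $\ord$ with index $3$ and has discriminant $-3h^2k^2$; since it contains the locally maximal $\ord$ at each prime $\ell\neq 3$ and its discriminant already attains $3$-valuation $1$, it equals $\ord_K$, which gives the second basis. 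The main obstacle is precisely this $3$-adic bookkeeping: running Dedekind's criterion and pinning down the integrality of $\omega$ together with its sign.
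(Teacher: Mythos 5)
The paper offers no proof of this statement at all: it is quoted verbatim from Alaca--Williams (Theorem 7.3.2), i.e.\ it is an imported classical result (Dedekind's theorem on pure cubic fields), so there is no in-paper argument to compare yours against. Your self-contained derivation is correct and follows the standard route: $\mathcal{O}=\ZZ\oplus\ZZ\theta\oplus\ZZ\eta$ with $\eta=\theta^2/k$ is an order of discriminant $-27h^2k^2$; it is $\ell$-maximal for $\ell\mid hk$, $\ell\neq 3$, and for $\ell=3$ when $3\mid m$, by the Eisenstein property of $x^3-hk^2$ (resp.\ $x^3-h^2k$); Dedekind's criterion at $3$ for $x^3-m$ gives $3$-maximality exactly when $m^2\not\equiv 1\pmod 9$ (your computation $T(m)=m(m^2-1)/3$ is right); and in the case $m\equiv\pm1\pmod 9$ the discriminant bookkeeping forces $v_3([\mathcal{O}_K:\mathcal{O}])=1$, so the index-$3$ enlargement $\ZZ\oplus\ZZ\theta\oplus\ZZ\omega$ with $\omega=(k^2\pm k^2\theta+\theta^2)/(3k)$ must be all of $\mathcal{O}_K$ once $\omega$ is known to be integral. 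The one step you defer (``I would then check that $\omega$ is an algebraic integer'') is genuine but routine and does work out with the sign convention $m\equiv\epsilon\pmod 9$: the characteristic polynomial of $\omega$ has trace $k$, middle coefficient $k^2(1-\epsilon h)/3$ (integral since $\epsilon h\equiv 1\pmod 3$), and norm $\tfrac{k}{27}\bigl(k^2(1+\epsilon m)+h^2-3\epsilon m\bigr)$; writing $k^2=1+3a$ and $\epsilon m=1+9u$ one finds $k^4\bigl(k^2(1+\epsilon m)+h^2-3\epsilon m\bigr)\equiv 27u\equiv 0\pmod{27}$, so all coefficients are integers, and the wrong sign fails already at the middle coefficient. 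What your approach buys is a proof of exactly the classical statement the paper needs (in fact only the first case, $m^2\not\equiv1\pmod 9$, is used later), at the cost of the $3$-adic verification that the citation to Alaca--Williams sidesteps.
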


Let~$X=\theta$ and~$Y=\theta^2/k$, and take~$M=\begin{pmatrix} a & 0 \\ b & c \end{pmatrix} \in \mathcal{M}_n^2$, so that~$\ord_{\iota(M)}$ has~$\{1,aX,bX+cY\}$ as a~$\ZZ$-basis. Then~$\iota(M)$ belongs to $\lmatord$ if and only if~$\ord_{\iota(M)}$ is closed under multiplication. This holds if and only if~$(aX)^2,aX(bX+cY)$, and $(cY)^2$ are contained in~$\ord_{\iota(M)}$, which means they are~$\ZZ$-linear combinations of~1, $aX$, and $bX+cY$, or equivalently are~$\ZZ$-linear combinations of~$aX$ and~$bX+cY$ in~$\ord_K/\ZZ$. In~$\mathcal{O}_K/\mathbb{Z}$, we have~$X^2=kY$,~$XY=0$, and~$Y^2=hX$. Thus, for $M\in\mathcal{M}_n^2$,~$\iota(M)\in\lmatord$ if and only if 
\begin{equation}\label{eu_eqn}
\begin{split}
(aX)^2&=a^2X^2=ka^2Y\\ 
aX(bX+cY)&=abX^2=kabY \\ 
(bX+cY)^2&=b^2X^2+c^2Y^2=kb^2Y+hc^2X
\end{split}
\end{equation}
are~$\mathbb{Z}$-linear sums of~$aX$ and~$bX+cY$.

\begin{prop}
Let~$r_p$ be the number of roots of~$X^3-m=0$ in~$\ZZ/p\ZZ$ counted without multiplicity. Then
$$
r_p=
\begin{cases}
0\quad{}\text{ if}\quad{}p\equiv1\pmod{3}\quad{}\text{and}\quad{}m^{\frac{p-1}{3}}\not\equiv 1\pmod{p},\\
1\quad{}\text{ if}\quad{}p\equiv2\pmod{3}\quad{}\text{or}\quad{}p\mid m,\\
3\quad{}\text{ if}\quad{}p\equiv1\pmod{3}\quad{}\text{and}\quad{}m^{\frac{p-1}{3}}\equiv 1\pmod{p}.\\
\end{cases}
$$
\end{prop}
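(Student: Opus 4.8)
The plan is to reduce the count to the structure of the cyclic group $G := (\ZZ/p\ZZ)^\times$. First I would dispose of the case $p \mid m$: then $X^3 - m \equiv X^3 \pmod p$, which has the single root $X \equiv 0$, so $r_p = 1$, consistent with the middle line of the formula. For $p \nmid m$ note that $X^3 - m$ is separable modulo $p$ (its derivative $3X^2$ vanishes only at $0$, which is not a root since $p \nmid m$ and $p \neq 3$), so counting with or without multiplicity makes no difference here, and every root lies in $G$, which is cyclic of order $p - 1$ because $p$ is prime.

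Next I would analyze the cubing homomorphism $\phi \colon G \to G$, $x \mapsto x^3$. Its kernel has order $e := \gcd(3, p-1)$, and its image is the unique subgroup of $G$ of index $e$; hence $x^3 = m$ has $e$ solutions in $G$ when $m \in \operatorname{im}(\phi)$ and none otherwise. If $p \equiv 2 \pmod 3$ then $e = 1$, so $\phi$ is bijective, $m$ is automatically a cube, and $r_p = 1$. If $p \equiv 1 \pmod 3$ then $e = 3$, and I would invoke the generalized Euler criterion: since $G$ is cyclic, its unique subgroup of order $(p-1)/3$, namely $\operatorname{im}(\phi)$, equals $\{a \in G : a^{(p-1)/3} = 1\}$. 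Thus $m$ is a cube modulo $p$ if and only if $m^{(p-1)/3} \equiv 1 \pmod p$, giving $r_p = 3$ in that case and $r_p = 0$ otherwise. Assembling the three situations yields the claimed formula.

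There is no genuine obstacle; the only step that requires a moment's care is the identification of $\operatorname{im}(\phi)$ with the solution set of $a^{(p-1)/3} = 1$ when $3 \mid p-1$, which follows at once from the fact that a finite cyclic group has exactly one subgroup of each order dividing its order.
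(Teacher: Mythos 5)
Your argument is correct and is precisely the ``order argument in the cyclic group $\mathbb{F}_p^\times$'' that the paper invokes without detail: the case $p\mid m$ gives the single root $0$, and for $p\nmid m$ the kernel-size $\gcd(3,p-1)$ of the cubing map together with the Euler-type criterion $m^{(p-1)/3}\equiv 1\pmod p$ yields the three cases. No discrepancy with the paper's (omitted) proof.
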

\begin{proof}
    This is a well-known result from elementary number theory, which comes from the order argument in the cyclic group~$\mathbb{F}_p^\times$.
\end{proof}
Denote by~$\pord{\cdot}$ the~$p$-adic valuation in~$\QQ$, normalized as~$\pord{p}=1$. Now we are ready to give a closed formula for $|\pmatord|$.

\begin{thm}
Let~$m\in\ZZ$ be a cube-free integer with~$m^2\not\equiv1\pmod{9}$. Set~$m=hk^2$ for square free integers~$h, k\in\ZZ$ with~$(h,k)=1$. Let~$\cf$, and~$r_p$ as in Proposition 2.3. 
Then
$$\left|\pmatord\right|=
\begin{cases}
\frac{p^{\left[\frac{n}{3}\right]+1}-1}{p-1}\quad{}\text{ if}\quad{}p\mid m,\\
\\
\frac{p^{\left[\frac{2n}{3}\right]-\left\lceil\frac{n}{3}\right\rceil+1}-1}{p-1}+\frac{p^{\left\lceil\frac{n}{3}\right\rceil}-1}{p-1}r_p\quad{}\text{ if}\quad{}p\nmid m.\\
\end{cases}$$
\end{thm}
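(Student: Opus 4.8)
The plan is to convert the condition $\iota(M)\in\pmatord$ into explicit divisibility relations between the entries of $M$, reduce these to inequalities between $p$-adic valuations, and then count. By Proposition 2.2 (and the discussion following it), every element of $\pmatord$ equals $\iota(M)$ for a unique $M=\begin{pmatrix}a&0\\b&c\end{pmatrix}$ in Hermite form with $ac=p^n$; thus $a=p^i$ and $c=p^{n-i}$ for some $0\le i\le n$, and $0\le b<p^i$. A direct computation shows that, for $\alpha,\beta\in\ZZ$, the element $\alpha X+\beta Y$ of $\ord_K/\ZZ$ lies in $\ZZ(aX)+\ZZ(bX+cY)$ if and only if $c\mid\beta$ and $ac\mid\alpha c-\beta b$. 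Feeding the three elements occurring in (\ref{eu_eqn}) into this criterion, and using that $a,c$ are powers of $p$ to see that the resulting condition $c\mid kab$ is implied by the other two, I find that $\iota(M)\in\pmatord$ precisely when
$$c\mid ka^2,\qquad c\mid kb^2,\qquad ac\mid hc^3-kb^3.$$
Hence $|\pmatord|$ is the number of pairs $(i,b)$ with $0\le i\le n$ and $0\le b<p^i$ satisfying these three relations; I would isolate this as a preliminary lemma.

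Now set $s=\nu_p(b)$, so that $0\le s\le i-1$ when $b\ne0$ and $s=\infty$ when $b=0$. Since $\gcd(h,k)=1$ and $h,k$ are square-free we have $\nu_p(h),\nu_p(k)\in\{0,1\}$ with at most one of them equal to $1$, and $\nu_p(hc^3)=\nu_p(h)+3(n-i)$, $\nu_p(kb^3)=\nu_p(k)+3s$; the three relations above become inequalities in $i$ and $s$, and the only delicate ingredient is the value of $\nu_p(hc^3-kb^3)$. If $p\mid m$, exactly one of $\nu_p(h),\nu_p(k)$ is $1$, so $\nu_p(hc^3)$ and $\nu_p(kb^3)$ are never congruent modulo $3$; hence $\nu_p(hc^3-kb^3)=\min\{\nu_p(hc^3),\nu_p(kb^3)\}$ and the condition $ac\mid hc^3-kb^3$ splits into one constraint on $i$ and one on $s$. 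In each of the subcases $p\mid h$ and $p\mid k$ I would verify that these constraints confine $i$ to an interval of length $\lfloor n/3\rfloor$, that $b=0$ is admissible for every such $i$, and that the admissible $b\ne0$ are exactly those with $\nu_p(b)$ at least a fixed threshold, the point being that on this range of $i$ the thresholds coming from $c\mid ka^2$ and $c\mid kb^2$ are dominated by the one coming from $ac\mid hc^3-kb^3$. This produces a power-of-$p$ worth of admissible $b$ for each $i$, and summing the geometric series gives $|\pmatord|=\dfrac{p^{\lfloor n/3\rfloor+1}-1}{p-1}$.

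If instead $p\nmid m$, then $\nu_p(hc^3)=3(n-i)$ and $\nu_p(kb^3)=3s$ are both multiples of $3$, so genuine cancellation can occur, and I would split according to whether $i\le\lfloor 2n/3\rfloor$. For $i\le\lfloor 2n/3\rfloor$ one has $\nu_p(hc^3)=3(n-i)\ge n$, and the count proceeds exactly as in the case $p\mid m$: $b=0$ always works, $b\ne0$ works if and only if $\nu_p(b)\ge\lceil n/3\rceil$, so the contribution is $\sum_{i=\lceil n/3\rceil}^{\lfloor 2n/3\rfloor}p^{\,i-\lceil n/3\rceil}=\dfrac{p^{\lfloor 2n/3\rfloor-\lceil n/3\rceil+1}-1}{p-1}$. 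For $i>\lfloor 2n/3\rfloor$ one has $\nu_p(hc^3)=3(n-i)<n$, so $ac\mid hc^3-kb^3$ forces $\nu_p(kb^3)=\nu_p(hc^3)$, i.e. $s=n-i$; writing $b=p^{n-i}w$ with $p\nmid w$, the condition becomes $(kw)^3\equiv m\pmod{p^{\,3i-2n}}$. This is where $p\ne3$ and the quantity $r_p$ enter: since $p\ne3$ and $p\nmid m$, Hensel's lemma makes the number of $w$ modulo $p^{\,3i-2n}$ solving this congruence equal to the number modulo $p$, and $w\mapsto kw$ identifies these with the roots of $X^3-m$ in $\ZZ/p\ZZ$, of which there are $r_p$; distributing these residue classes through the range $0\le w<p^{\,2i-n}$ yields $r_p\,p^{\,n-i}$ admissible $b$ for each such $i$, so that $\sum_{i=\lfloor 2n/3\rfloor+1}^{n}r_p\,p^{\,n-i}=r_p\cdot\dfrac{p^{\lceil n/3\rceil}-1}{p-1}$. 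Adding the two contributions gives the stated formula for $p\nmid m$.

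The main obstacle is precisely the last subcase — $p\nmid m$ with $i>\lfloor 2n/3\rfloor$ — in which one must count exactly how the $r_p$ solution classes of the cubic congruence modulo $p^{\,3i-2n}$ are spread among the $b$ with $0\le b<p^i$; this is the only place where the Hensel lift, hence the hypothesis $p\ne3$, and the definition of $r_p$ are used in an essential way. Everything else is careful bookkeeping with floors and ceilings, the recurring observation being that on each relevant range of $i$ the several lower bounds on $\nu_p(b)$ imposed by the divisibility conditions collapse to a single one, so that the number of admissible $b$ is a clean power of $p$ and the partial sums telescope into the geometric series above.
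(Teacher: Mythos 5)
Your proposal is correct and follows essentially the same route as the paper: reduce membership in $\pmatord$ to divisibility conditions on the Hermite normal form entries (your $c\mid ka^2$, $c\mid kb^2$, $ac\mid hc^3-kb^3$ are exactly the paper's nonnegativity conditions on $\nu_p(kp^{2i-j})$, $\nu_p(kp^{-j}\beta^2)$, $\nu_p(hp^{2j-i}-kp^{-i-j}\beta^3)$), exploit the incongruence of valuations mod $3$ when $p\mid m$, and use Hensel's lemma to count the $r_p\,p^{n-i}$ extra solutions in the cancellation range when $p\nmid m$. The only difference is cosmetic: you split the $p\nmid m$ case by the range of $i$ rather than by whether condition (\ref{pm cond}) holds, but these partitions coincide.
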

\begin{proof}
Fix~$n\in\ZP$, and consider $M\in\mathcal{M}_{p^n}^2$. Then $M$ is of the form
$\begin{pmatrix} p^i & 0 \\ \beta & p^j \end{pmatrix}$ such that~$i+j=n$ and~$\beta<p^i$. We characterize the subset $\iota^{-1}(\pmatord)\subset\mathcal{M}_{p^n}^2$ in terms of~$(i,j,\beta)$. By (\ref{eu_eqn}), $\iota(M)$ belongs to $\pmatord$ if and only if the coefficients of~$p^iX$ and~$\beta X+p^jY$ in
\begin{equation*}
\begin{split}
kp^{2i}Y&=kp^{2i-j}(\beta X +p^jY)-kp^{i-j}\beta(p^iX)\\ 
kp^{i}\beta Y&=kp^{i-j}\beta(\beta X +p^jY)-kp^{-j}\beta^2(p^iX) \\ 
k\beta^2Y+hp^{2j}X&=kp^{-j}\beta^2(\beta X +p^jY)+(hp^{2j-i}-kp^{-i-j}\beta^3)(p^iX)
\end{split}
\end{equation*}
are integers. These coefficients are integers if and only if four numbers
$$kp^{2i-j},kp^{i-j}\beta,kp^{-j}\beta^2,hp^{2j-i}-kp^{-i-j}\beta^3$$
have nonnegative $p$-adic valuations. If $\beta=0$, it is enough to consider $kp^{2i-j}$ and $hp^{2j-i}$. If $\beta\not=0$, then $\pord{\beta}<i$ as $\beta<p^i$. Hence $\pord{kp^{2i-j}}\geq\pord{kp^{i-j}\beta}\geq \pord{kp^{-j}\beta^2}$, so it is enough to consider $kp^{2i-j}$ and $hp^{2j-i}-kp^{-i-j}\beta^3$. Thus~$M\in\iota^{-1}(\pmatord)$ if and only if one of the following two conditions is satisfied:
\begin{itemize}
    \item[(i)]~$\beta=0$,~$2i-j+\pord{k}\geq0$ and~$2j-i+\pord{h}\geq 0$,
    \item[(ii)]~$\beta\not=0$,~$\pord{k\beta^2}-j\geq0$ and~$\pord{hp^{2j-i}-kp^{-i-j}\beta^3}\geq0.$
\end{itemize}
Since $i+j=n$, the condition (i) is equivalent to
$$\beta=0,\nt{n-\pord{k}}\leq i \leq \nt{2n+\pord{h}}.$$
Now we use the above characterization to count the number of triples $(i,j,\beta)$ such that the corresponding matrix is contained in $\iota^{-1}(\pmatord)$.
\begin{mycases}
\case
Suppose~$p\mid m$. Then we have the following observation.
\begin{lem}
For $M\in\mathcal{M}_{p^n}^2$, corresponding $(i,j,\beta)$ satisfies condition (ii) if and only if they satisfy all of
 $$\beta\not=0,\,i\leq\nt{2n+\pord{h}},\text{ and }\pord{\beta}\geq \nt{n-\pord{k}}.$$
\end{lem}
\begin{proof}
    This comes from the nonarchimedean property of $\nu_p$. Note that one of $\pord{h}$ and $\pord{k}$ is 1, and the other is 0 since $m$ is cube-free. Then
    $$\pord{hp^{2j-i}}-\pord{kp^{-i-j}\beta^3}=\pord{h}-\pord{k}+3j-3\pord{\beta}\not\equiv 0\pmod{3},$$
    so 
    $$\pord{hp^{2j-i}-kp^{-i-j}\beta^3}\geq 0 \Longleftrightarrow\pord{hp^{2j-i}}\geq0\text{ and }\pord{kp^{-i-j}\beta^3}\geq 0.$$
    Moreover, if $\pord{kp^{-i-j}\beta^3}\geq 0$, then $\pord{k\beta^2}\geq j$ follows automatically since $\pord{\beta}<i$. Finally, since $i+j=n$, we have
    $$\pord{hp^{2j-i}}\geq0\text{ and }\pord{kp^{-i-j}\beta^3}\geq 0\Longleftrightarrow i\leq\nt{2n+\pord{h}}\text{ and }\pord{\beta}\geq\nt{n-\pord{k}}.$$
    Thus our claim is proved.
\end{proof}

By Lemma 2.6, we conclude that~$\iota(M)\in\pmatord$ if and only if
\begin{equation}\label{pm cond}
    \beta=0\text{ or }\pord{\beta}\geq\frac{n-\pord{k}}{3},\quad\frac{n-\pord{k}}{3}\leq i\leq\frac{2n+\pord{h}}{3}.
\end{equation}
Since one of~$\pord{k}$ and~$\pord{h}$ is 0 and the other is 1, we have
\begin{equation}\label{ab}
    \left[\frac{2n+\pord{h}}{3}\right]-\left\lceil\frac{n-\pord{k}}{3}\right\rceil=\left[\frac{n}{3}\right].
\end{equation}
Let 
$a=\left\lceil(n-\pord{k})/3\right\rceil$ and~$b=\left[(2n+\pord{h})/3\right]$. For fixed~$i$, the number of~$0\leq\beta<p^i$ satisfying (\ref{pm cond}) is~$p^{i-a}$. Thus by \eqref{ab}, the number of the triple~$(i,j,\beta)$ satisfying (\ref{pm cond}) is 
\begin{equation*}
\sum_{i=a}^{b}p^{i-a}
=\frac{p^{\left[\frac{n}{3}\right]+1}-1}{p-1}.
\end{equation*}
\case
Suppose~$p\nmid m$. Note that~$(i,j,\beta)$ satisfying the same condition (\ref{pm cond}) gives rise to an order in this case. However, we have additional possibilities in (ii) which does not satisfy (\ref{pm cond}). Indeed, even if~$\pord{hp^{2j-i}}<0\text{ or }\pord{kp^{-i-j}\beta^3}<0$, if
$$\pord{hp^{2j-i}}=\pord{kp^{-i-j}\beta^3},$$
which is equivalent to~$\pord{\beta}=j$, the condition (ii) is satisfied when
\begin{equation}\label{pnm add}
    \pord{h-k\al^3}=\pord{m-(k\al)^3}\geq i-2j,
\end{equation}
where $\al=p^{-\pord{\beta}}\beta$. Since~$\pord{hp^{2j-i}}<0$, we require $j<n/3$. Note that 
$$\pord{k\beta^2}-j=j\geq0$$
is automatically satisfied. As~$\al<p^{i-\pord{\beta}}=p^{i-j}$, the number of~$\al$ satisfying (\ref{pnm add}) is~$p^jr_p$ by applying Hensel's lemma in the equation $x^3-m=0$. Thus, the number of the triple~$(i,j,\beta)$ satisfying (i) or (ii) is
\begin{equation*}
\sum_{i=\left\lceil\frac{n}{3}\right\rceil}^{\left[\frac{2n}{3}\right]}p^{i-\left\lceil\frac{n}{3}\right\rceil}+\sum_{j=0}^{\left\lceil\nt{n}\right\rceil-1}p^jr_p
=\frac{p^{\left[\frac{2n}{3}\right]-\left\lceil\frac{n}{3}\right\rceil+1}-1}{p-1}+\frac{p^{\left\lceil\frac{n}{3}\right\rceil}-1}{p-1}r_p.\\
\end{equation*}
\end{mycases}
\end{proof}

\section{Index form of the order}
To analyze the monogeneity of the order, we extend the notion of \emph{index form} to orders. For an order~$\ord$ of the number field~$K$ of degree~$d$ with a~$\ZZ$-basis~$\mathcal{B}$, let
$$I_{\ord}^\mathcal{B}:\ord/\ZZ\cong\ZZ^{d-1} \to \ZZ\cong\wedge^d\ord$$
be the map given by~$\al \mapsto1\wedge\al\wedge\cdots\wedge\al^{d-1}$. Then~$I_\ord^\mathcal{B}$ is a homogeneous form of degree~${d \choose 2}$ in~$d-1$ variables. Thus~$\ord$ is monogenic if and only if the equation
$$|I_{\ord}^\mathcal{B}(x_1,x_2,\ldots,x_{d-1})|=1,\text{ }(x_1,x_2,\ldots,x_{d-1})\in\ZZ^{d-1}$$
has a solution.

The index form depends on the choice of a~$\ZZ$-basis of~$\ord$. In \S 2, we discussed that every order has a preferred orderded $\ZZ$-basis. The index form with respect to that basis is related to the index form of the ring of integers.
\begin{lem}
Let~$K$ be a number field of degree~$d$, and let~$I_{\ord_K}^\mathcal{B}$ be the index form of~$\ord_K$ with respect to the integral basis~$\mathcal{B}=\{1,\al_1,\ldots,\al_{d-1}\}$. Let~$\ord=\ord_M \subset \ord_K$ be an order of~$K$ corresponding to~$M\in\lmatord$, and~$\mathcal{B}_M$ a preferred ordered~$\ZZ$-basis of $\ord$ with respect to~$M$. Then the index form of~$\ord$ with respect to $\mathcal{B}_M$ is 
$$I_{\ord}^{\mathcal{B}_M}(x_1,x_2,\ldots,x_{d-1})=\frac{1}{n}I_{\ord_K}^\mathcal{B}((x_1,x_2,\ldots,x_{d-1})M).$$
\end{lem}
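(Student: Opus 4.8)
The plan is to use that, up to the choice of generator for the top exterior power, the quantity $1\wedge\gamma\wedge\gamma^{2}\wedge\cdots\wedge\gamma^{d-1}$ depends only on the element $\gamma\in K$ and not on the $\ZZ$-lattice ($\ord$ or $\ord_K$) in which it is computed. Passing from $\ord_K$ to the suborder $\ord=\ord_M$ then changes the index form only through the change of generator of the top exterior power, and that change is measured by $\det M=n=[\ord_K:\ord]$.

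First I would make the identifications explicit, working inside the one-dimensional $\QQ$-vector space $\wedge^{d}_{\QQ}K$ (the $d$-th exterior power of the $\QQ$-vector space $K$), which contains $\wedge^{d}\ord\subseteq\wedge^{d}\ord_K$ as lattices. Put $\mathcal{B}_M=\{v_1,\ldots,v_d\}$; by Proposition 2.2 we have $v_1=1$. Given $\al\in\ord/\ZZ$ with coordinate vector $(x_1,\ldots,x_{d-1})$ relative to $\mathcal{B}_M$, choose the representative $\gamma=x_1v_2+\cdots+x_{d-1}v_d\in\ord\subseteq\ord_K$. Expanding each $v_i$ in the integral basis $\mathcal{B}$ via (\ref{basis}), and using that the first column of $M$ equals $(1,0,\ldots,0)$ (Proposition 2.2), one checks that the coordinate vector of $\gamma$ relative to $\mathcal{B}$ is exactly $(x_1,\ldots,x_{d-1})M$; this product only involves the lower right $(d-1)\times(d-1)$ block of $M$, since the contribution of $v_1=1$ disappears in $\ord/\ZZ$. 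Hence $1\wedge\gamma\wedge\cdots\wedge\gamma^{d-1}$ is one and the same element of $\wedge^{d}_{\QQ}K$, whether $\gamma$ is read as an element of $\ord$ or of $\ord_K$; and since $\gamma\in\ord$ it in fact lies in $\wedge^{d}\ord$.

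Next I would compare the two normalizations. By definition, $I_{\ord}^{\mathcal{B}_M}(x_1,\ldots,x_{d-1})$ is the integer $c$ with $1\wedge\gamma\wedge\cdots\wedge\gamma^{d-1}=c\,(v_1\wedge\cdots\wedge v_d)$, while $I_{\ord_K}^{\mathcal{B}}((x_1,\ldots,x_{d-1})M)$ is the integer $c'$ with $1\wedge\gamma\wedge\cdots\wedge\gamma^{d-1}=c'\,(1\wedge\al_1\wedge\cdots\wedge\al_{d-1})$. Applying $\wedge^{d}$ to the relation (\ref{basis}) gives $v_1\wedge\cdots\wedge v_d=(\det M)\,(1\wedge\al_1\wedge\cdots\wedge\al_{d-1})$, and $\det M=n$ because $M\in\lmatord\subseteq\lmatn$. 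Substituting and cancelling yields $c'=nc$, i.e.\ the asserted identity.

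I do not anticipate a genuine obstacle; the content is the bookkeeping above. The one step that wants a line of care is that the construction is independent of the chosen representative $\gamma$ of $\al\in\ord/\ZZ$. If $\gamma$ is replaced by $\gamma+c$ with $c\in\ZZ$, then in the multilinear expansion of $1\wedge(\gamma+c)\wedge\cdots\wedge(\gamma+c)^{d-1}$ every term other than $1\wedge\gamma\wedge\cdots\wedge\gamma^{d-1}$ repeats one of the vectors $1,\gamma,\ldots,\gamma^{d-1}$ — writing $1=\gamma^{0}$, a nonvanishing term forces each exponent to be maximal — so only the leading term, with coefficient $1$, survives. This legitimizes both index forms as maps out of $\ord/\ZZ$ and $\ord_K/\ZZ$, and is where $v_1=1$ from Proposition 2.2 is used.
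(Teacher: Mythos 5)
Your argument is correct and is essentially the paper's proof made explicit: your computation that coordinates transform by $M$ is the paper's map $\phi$, and your identity $v_1\wedge\cdots\wedge v_d=(\det M)(1\wedge\al_1\wedge\cdots\wedge\al_{d-1})$ with $\det M=n$ is exactly the paper's observation that $\psi$ is multiplication by $n$, the two being tied together by the same commutative square. The extra care you take (well-definedness modulo $\ZZ$ and the role of $v_1=1$) is detail the paper leaves implicit, not a different route.
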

\begin{proof}
We have a commutative diagram 
\[\begin{tikzcd}
	{\mathcal{O}/\mathbb{Z}} & {\wedge^{d}\mathcal{O}} \\
	{\mathcal{O}_K/\mathbb{Z}} & {\wedge^{d}\mathcal{O}_K}
	\arrow["{I_{\mathcal{O}}^{\mathcal{B}_M}}", from=1-1, to=1-2]
	\arrow[ "\phi" ',from=1-1, to=2-1]
	\arrow[ "\psi", from=1-2, to=2-2]
	\arrow["{I_{\mathcal{O}_K}^\mathcal{B}}"', from=2-1, to=2-2]
\end{tikzcd}\]
where~$\phi, \psi$ are induced from the inclusion. For~$(x_1,x_2,\ldots,x_{d-1})\in \ord/\ZZ$, we have 
$$\phi(x_1,x_2,\ldots,x_{d-1})=(x_1,x_2,\ldots,x_{d-1})M.$$
Since~$\psi$ is the multiplication-by-$n$ map, our claim is proved.
\end{proof}
For a pure cubic field~$\cf$ with~$m=hk^2$ and~$m^2\not\equiv1\pmod{9}$, the index form of~$\ord_K$ is 
$$I_{\ord_K}^\mathcal{B}(x,y)=kx^3-hy^3$$
with respect to the integral basis~$\mathcal{B}=\left\{1,\theta,\frac{\theta^2}{k}\right\}$. By Lemma 3.1, for~$M=\begin{pmatrix} p^i & 0 \\ \beta & p^j \end{pmatrix}\in \iota^{-1}(\mathcal{M}_{p^n,K})$, $\ord_M$ is monogenic if and only if 
\begin{equation}\label{eqn:tm1}
    |k(p^ix+\beta y)^3-h(p^j y)^3|=p^{i+j}.
\end{equation}
has an integer solution $(x,y)\in\ZZ^2$.

\section{The upper bound of the number of monogenic orders}

Let
$$\pmono:=\{M\in\pmatord| \ord_M\text{ is monogenic}\}.$$
In \S 3, we stated that for~$M=\begin{pmatrix} p^i & 0 \\ \beta & p^j \end{pmatrix}\in \iota^{-1}(\mathcal{M}_{p^n,K})$, $\iota(M)\in\pmono$ if and only if (\ref{eqn:tm1}) has the integer solution~$(x,y)\in\ZZ^2$. 
To bound~$|\pmono|$, we use the finiteness result of the Thue Mahler equation.

\begin{thm}[Mahler, \cite{Mahler1933}]
Let~$F(U,V)\in\ZZ[U,V]$ be a separable binary form of~$\deg F \geq 3$, and let~$\{p_1,p_2,\ldots,p_s\}$ be prime numbers of~$\ZZ$. Fix~$c\in\ZZ$ such that~$(c,p_1p_2\cdots p_s)=1$. Consider the equation
\begin{equation}\label{eqn:tm2}
    F(U,V)=cp_1^{z_1}p_2^{z_2}\cdots p_s^{z_s}
\end{equation}
with unknowns~$(U,V,z_1,z_2,\ldots,z_s)\in\ZZ^{s+2}$. This equation has finite number of primitive solutions, where a solution $(U,V,N)$ is called primitive if~$(U,V)=1$.
\end{thm}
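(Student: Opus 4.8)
The plan is to reduce the Thue--Mahler equation~(\ref{eqn:tm2}) to a unit equation over a ring of $S$-integers, where finiteness of the solution set is classical. Let $L$ be the splitting field of $F$ over $\QQ$ and write $F(U,V)=a_0\prod_{i=1}^{d}(U-\al_iV)$ with $d=\deg F\geq 3$ and the roots $\al_i\in L$ pairwise distinct, using that $F$ is separable. Let $T$ be the finite set of places of $L$ consisting of all archimedean places together with all finite places lying over $p_1,\dots,p_s$, over $a_0$, over $c$, and over $\prod_{i<j}(\al_i-\al_j)$; write $\mathcal{O}_{L,T}$ for the ring of $T$-integers, a Dedekind domain with finite class group, and $\mathcal{O}_{L,T}^{\times}$ for its finitely generated unit group. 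After the standard device of replacing $U-\al_iV$ by the algebraic integer $a_0U-(a_0\al_i)V$, one checks that for a primitive solution $(U,V,z_1,\dots,z_s)$ of~(\ref{eqn:tm2}) the fractional ideal generated by $U-\al_iV$ is supported only on $T$: a prime $\mathfrak{q}\notin T$ dividing both $U-\al_iV$ and $U-\al_jV$ for some $i\neq j$ would divide $(\al_i-\al_j)U$ and $(\al_i-\al_j)V$, hence, since $(U,V)=1$, would divide $\al_i-\al_j$ and so lie in $T$; while the product of all $d$ factors is $a_0^{-1}cp_1^{z_1}\cdots p_s^{z_s}$. Since the $T$-class group is finite, we may write $U-\al_iV=\gamma_i\eta_i$ with $\gamma_i$ ranging over a fixed finite subset of $L^{\times}$ and $\eta_i\in\mathcal{O}_{L,T}^{\times}$.

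Next I would use Siegel's identity. Pick three of the roots, say $\al_1,\al_2,\al_3$; then
$$(\al_2-\al_3)(U-\al_1V)+(\al_3-\al_1)(U-\al_2V)+(\al_1-\al_2)(U-\al_3V)=0$$
identically. Dividing this by $(\al_1-\al_2)(U-\al_3V)$ and substituting $U-\al_iV=\gamma_i\eta_i$ rearranges it into the shape $x+y=1$, where $x$ and $y$ are each a fixed element of $L^{\times}$, drawn from a finite list depending only on the $\gamma_i$ and the $\al_i$, times $\eta_1/\eta_3$, respectively $\eta_2/\eta_3$. Enlarging the finite set $T$ once more if necessary so that all these fixed elements are $T$-units, we obtain $x,y\in\mathcal{O}_{L,T}^{\times}$.

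The heart of the matter is now the finiteness of the solution set of the $T$-unit equation $x+y=1$ with $x,y\in\mathcal{O}_{L,T}^{\times}$. This is the only genuinely Diophantine ingredient and the real content of Mahler's theorem: Mahler's original argument deduces it from a $p$-adic analogue of the Thue--Siegel method in Diophantine approximation, while a modern treatment can instead combine Baker-type lower bounds for linear forms in complex and $p$-adic logarithms, or apply the $p$-adic Subspace Theorem of Schlickewei. Everything surrounding this step is routine commutative algebra; this is the step I expect to be the main obstacle, and in the classical approach it is also the source of ineffectivity.

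Finally, finiteness of the pairs $(x,y)$ bounds the number of possible values of the ratios $\eta_1/\eta_3$ and $\eta_2/\eta_3$, hence of $(U-\al_1V)/(U-\al_3V)$ and $(U-\al_2V)/(U-\al_3V)$. Because the $\al_i$ are pairwise distinct, already one such ratio determines the point $(U:V)\in\mathbb{P}^{1}(L)$ through a fractional linear relation, so only finitely many projective points $(U:V)$ occur; combined with the primitivity condition $(U,V)=1$ this determines $(U,V)$ up to sign, after which $z_1,\dots,z_s$ are fixed by $F(U,V)=cp_1^{z_1}\cdots p_s^{z_s}$. Therefore~(\ref{eqn:tm2}) has only finitely many primitive solutions, as claimed.
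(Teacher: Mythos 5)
The paper does not prove this statement at all: it is quoted verbatim as Mahler's theorem with a citation to \cite{Mahler1933}, so there is no internal proof to compare against. Your sketch is the standard modern route -- factor $F$ over its splitting field, observe that for a primitive solution each $U-\al_iV$ generates a fractional ideal supported on the finite set $T$, apply Siegel's identity to three distinct roots (this is where $\deg F\geq 3$ and separability enter), and reduce to the $T$-unit equation $x+y=1$ -- and the reduction steps you give are sound. Two remarks. First, a small redundancy: with your choice of $T$ (primes over $a_0$, $c$, the $p_i$ and the differences $\al_i-\al_j$), each $a_0U-a_0\al_iV$ is an algebraic integer with $\operatorname{ord}_{\mathfrak{q}}=0$ at every $\mathfrak{q}\notin T$, because the product of these integral factors equals $a_0^{d-1}cp_1^{z_1}\cdots p_s^{z_s}$, which has valuation zero outside $T$; so $U-\al_iV$ is already a $T$-unit and the detour through the finiteness of the $T$-class group (and the pairwise-gcd argument) is not needed. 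Second, and more substantively, the step you yourself flag -- finiteness of solutions of the $T$-unit equation -- is not an auxiliary lemma but is of essentially the same depth as the theorem being proved: Mahler's original argument is precisely a $p$-adic extension of the Thue--Siegel approximation method, and the unit-equation finiteness is usually proved by that same machinery (or by Baker-type or Subspace-theorem bounds). So what you have written is a correct reduction of the Thue--Mahler theorem to the $S$-unit theorem together with a citation for the latter, not a self-contained proof; in the context of this paper that is perfectly adequate, since the paper itself simply invokes \cite{Mahler1933}, but the proposal should be presented as such rather than as an independent argument.
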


The equation (\ref{eqn:tm2}) is called the \emph{Thue-Mahler equation}. Theorem 4.1 says that there are only finitely many triples of~$(U,V,N)\in\ZZ^3$ which satisfies
\begin{equation}\label{eqn:tm3}
    F(U,V)=kU^3-hV^3=\pm p^N
\end{equation}
and~$(U,V)=1$. 

\begin{lem}
The primitive solutions of (\ref{eqn:tm3}) have the form of~$(p^{a}U',p^{b}V',N)$, where~$(p,U'V')=1$, and their~$(a,b,N)$ fall into the following four cases:
\begin{enumerate}
\item[(i)] $(0,0,0)$,
\item[(ii)] $(a,0,\pord{h})\quad\text{ for }a>0$,
\item[(iii)] $(0,b,\pord{k})\quad\text{ for }b>0$,
\item[(iv)] $(0,0,N)\quad\text{ for }N>0\text{ if }p\nmid m$.
\end{enumerate}

\end{lem}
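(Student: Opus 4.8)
The plan is to recover the shape of each primitive solution purely from the non-archimedean valuation $\pord{\cdot}$. Let $(U,V,N)$ be a primitive solution of \eqref{eqn:tm3}, and write $U=p^aU'$, $V=p^bV'$ with $\pord{U'}=\pord{V'}=0$; since $(U,V)=1$ we have $\min\{a,b\}=0$. As $kU^3-hV^3=\pm p^N\neq 0$, we have $N=\pord{kU^3-hV^3}$, so everything reduces to computing this valuation in the three cases $a>0$ (whence $b=0$), $b>0$ (whence $a=0$), and $a=b=0$.

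The one input about the coefficients that I would use throughout is that $m=hk^2$ is cube-free with $h,k$ square-free and coprime, so $\pord{h},\pord{k}\in\{0,1\}$ and they are not both positive; equivalently, $p\mid m$ exactly when one of $\pord{h},\pord{k}$ equals $1$. In particular $\pord{kU^3}=\pord{k}+3a$ and $\pord{hV^3}=\pord{h}+3b$, and whenever $\pord{kU^3}\neq\pord{hV^3}$ the ultrametric equality gives $N=\min\{\pord{kU^3},\pord{hV^3}\}$.

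If $a>0$ and $b=0$, then $\pord{kU^3}=\pord{k}+3a\geq 3$ while $\pord{hV^3}=\pord{h}\leq 1$; the two valuations are distinct, so $N=\pord{h}$, which is case (ii). The case $b>0$, $a=0$ is symmetric and gives $N=\pord{k}$, i.e.\ case (iii). Finally, if $a=b=0$ (so $p\nmid UV$), I would distinguish according to whether $p\mid m$: if $p\mid m$, then $\pord{kU^3}=\pord{k}$ and $\pord{hV^3}=\pord{h}$ are distinct members of $\{0,1\}$, so $N=\min\{\pord{h},\pord{k}\}=0$, which is case (i); if $p\nmid m$, then $\pord{h}=\pord{k}=0$ and $N=\pord{kU^3-hV^3}$ may be any nonnegative integer, yielding case (i) when $N=0$ and case (iv) when $N>0$. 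The same computation shows that when $a=b=0$ and $p\mid m$ one always gets $N=0$, so a solution of the shape $(0,0,N)$ with $N>0$ can only occur for $p\nmid m$, which is exactly the stated restriction on case (iv).

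I do not expect any genuine obstacle: the argument is a short finite case check, and the only point needing care is tracking which of $h$ and $k$ absorbs the factor $p$ when $p\mid m$, so that the value of $N$ is correctly identified as $\pord{h}$, $\pord{k}$, or $0$. Cube-freeness of $m$ is precisely what forces all the relevant valuations to be distinct and makes the ultrametric equality applicable in each case.
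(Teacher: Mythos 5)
Your proposal is correct and follows essentially the same route as the paper: write $U=p^aU'$, $V=p^bV'$, use primitivity to force $\min\{a,b\}=0$, and determine $N=\pord{kU^3-hV^3}$ via the ultrametric (non-archimedean) equality, using that cube-freeness of $m=hk^2$ makes the relevant valuations of $kU^3$ and $hV^3$ distinct in each case. The only difference is that you spell out the $a=b=0$, $p\nmid m$ subcase explicitly, which the paper leaves implicit; no gap either way.
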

\begin{proof}
    Substituting $U=p^aU'$ and $V=p^bV'$ in (\ref{eqn:tm3}), we have
   $$kp^{3a}U'^3-hp^{3b}V'^3=\pm p^N.$$
    The primitivity implies that at least one of~$a,b$ must be zero. If $a=b=0$ and $p\mid m$, then 
    $$N=\min\{\pord{h},\pord{k}\}=0$$
    by the nonarchimedean property.
    If $a>0$, then $\pord{kp^{3a}U'^3}\geq 3a>1\geq\pord{hp^{3b}V'^3}$, so
    $$N=\min\{\pord{kp^{3a}U'^3},\pord{hp^{3b}V'^3}\}=\pord{h}.$$
    Finally, the case when $b>0$ is similar to the case $a>0$.
\end{proof}

Let~$g$ be the number of primitive solutions of (\ref{eqn:tm3}). If
$$(p^{a_1}U_1,p^{b_1}V_1,N_1), (p^{a_2}U_2,p^{b_2}V_2,N_2),\ldots,(p^{a_g}U_g,p^{b_g}V_g,N_g)$$
are primitive solutions of (\ref{eqn:tm3}), then every solution of (\ref{eqn:tm3}) has the form of
$$(p^{a_t+e}U_t,p^{b_t+e}V_t,N_t+3e)$$
for some~$t \in \{1,2,\ldots,g\}$ and~$e\in\ZZ_{\geq 0}$. Thus for~$M=\begin{pmatrix} p^i & 0 \\ \beta & p^j \end{pmatrix}\in \iota^{-1}(\pmatord)$,~$\iota(M)\in\pmono$ if and only if there exists~$t \in \{1,2,\ldots,g\}$ and~$e\in\ZZ_{\geq 0}$ such that
\begin{equation*}
\begin{split}
p^i x+\beta y &=p^{a_t+e}U_t\\ 
p^j y&= p^{b_t+e}V_t\\ 
n&=N_t+3e
\end{split}
\end{equation*}
has an integer solution~$(x,y)\in\ZZ^2$. Since the equation is linear, this holds if and only if
\begin{equation}\label{inteq}
\begin{split}
y&=p^{b_t+e-j}V_t\\
x&=p^{a_t+e-i}U_t-p^{b_t+e-j-i}\beta V_t
\end{split}
\end{equation}
are integers, and~$n=N_t+3e$ for some~$t \in \{1,2,\ldots,g\}$ and~$e\in\ZZ_{\geq 0}$.

\begin{lem}
Let~$(p^{a}U,p^{b}V,N)$ be a primitive solution of the equation (\ref{eqn:tm3}). Let~$n>N$ be an integer, and assume $3|n-N$. Let~$e\geq 0$ be the integer such that~$n=N+3e$. There are at most 2 elements in~$\pmatord$ such that the corresponding numbers
\begin{equation}\label{lemeq}
\begin{split}
y&=p^{b+e-j}V\\
x&=p^{a+e-i}U-p^{b+e-j-i}\beta V
\end{split}
\end{equation}
are integers.
\end{lem}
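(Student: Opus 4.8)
The plan is to split the count according to whether the lower-left entry $\beta$ of $M$ vanishes, and to show that each of the two resulting families contains at most one matrix, so that the total is at most $2$. Write $M=\begin{pmatrix} p^i & 0 \\ \beta & p^j \end{pmatrix}$ with $i+j=n$ and $0\le\beta<p^i$; since $n>N$ and $3\mid n-N$ we have $e\ge 1$. Because $p\nmid V$, integrality of $y=p^{b+e-j}V$ in \eqref{lemeq} already forces $b+e-j\ge 0$, i.e. $j\le b+e$, equivalently $i\ge n-b-e=:i_0$.

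Suppose first that $\beta=0$. Then $x=p^{a+e-i}U$, so $x\in\ZZ$ forces $i\le a+e$; and since $\iota(M)$ is an order with $\beta=0$, condition (i) in the proof of Theorem~2.5 gives $\frac{n-\pord{k}}{3}\le i\le\frac{2n+\pord{h}}{3}$. I would now combine these bounds with $i\ge i_0$. Using the classification of primitive solutions (Lemma~4.2) to write $N$ as $0$, $\pord{h}$, or $\pord{k}$, and using that $m$ is cube-free, so $\{\pord{h},\pord{k}\}\subseteq\{0,1\}$ with at most one of them equal to $1$, a direct computation --- pairing $i\le\frac{2n+\pord{h}}{3}$ with $i\ge i_0$ when $a>0$, and $i\le a+e$ with $i\ge\frac{n-\pord{k}}{3}$ when $a=0$ --- shows that the admissible values of $i$ lie in a real interval of length strictly less than $1$. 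Hence $i$, and with it $M$, is uniquely determined.

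Suppose now $\beta\neq 0$; write $\beta=p^{f}\al$ with $f=\pord{\beta}\in\{0,\dots,i-1\}$, $p\nmid\al$, $1\le\al<p^{i-f}$. Since $\iota(M)$ is an order with $\beta\neq 0$, the analysis of condition (ii) in the proof of Theorem~2.5 (via Lemma~2.6 and \eqref{pnm add}) forces one of: (D) $f\ge\frac{n-\pord{k}}{3}$ and $i\le\frac{2n+\pord{h}}{3}$; or (E) $p\nmid m$, $f=j<n/3$, and $\pord{h-k\al^3}\ge i-2j$. In case (D) I would compare the valuations $a+e-i$ and $b+e-j-i+f$ of the two monomials of $x$: if they differ, integrality of $x$ forces both to be $\ge 0$, in particular $f\ge i+j-b-e=i_0$ and hence $i>f\ge i_0$, which a short check against the order bounds and Lemma~4.2 rules out; so the two valuations coincide, giving $f=a+j-b$ and $\pord{U-\al V}\ge i-a-e$. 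Then $f=a+n-i-b\ge\frac{n-\pord{k}}{3}$ yields $i\le a-b+\frac{2n+\pord{k}}{3}$, and the minimum of this with $\frac{2n+\pord{h}}{3}$ is $\le i_0$ by a case check with Lemma~4.2; hence $i=i_0$, so $j=b+e$, $f=a+e$, and the congruence $\al\equiv UV^{-1}\pmod{p^{\,i-a-e}}$ has modulus $p^{\,i-f}$, exactly the range in which $\al$ varies --- so $\al$, hence $M$, is determined. In case (E), $f=j$ gives $x=p^{a+e-i}U-p^{b+e-i}\al V$, and because $i>2n/3\ge 2e>e$ this lies in $\ZZ$ only if $a=b$: otherwise $\min(a,b)=0$ by Lemma~4.2 and $\pord{x}=e-i<0$. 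Thus the solution has $a=b=0$, and $\pord{U-\al V}\ge i-e\ge N+e$ since $i\ge i_0=N+2e$. Using $kU^3-hV^3=\pm p^{N}$, the congruence $\al V\equiv U\pmod{p^{\,i-e}}$, and $p\neq 3$ (which makes $(\al V)^2+(\al V)U+U^2$ a $p$-adic unit), one finds $\pord{h-k\al^3}=N$; the order inequality $\pord{h-k\al^3}\ge i-2j$ then gives $3i-2n\le N$, i.e. $i\le\frac{2n+N}{3}=i_0$, so again $i=i_0$ and $\al$ is determined.

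Adding the two contributions gives at most $1+1=2$ matrices, as claimed. The main obstacle is the bookkeeping hidden in the order conditions: the decisive --- and slightly delicate --- point, to be verified case by case via Lemma~4.2 and the cube-freeness of $m$, is that the order-theoretic upper bound on $i$ coincides with, or is dominated by, the monogenicity lower bound $i_0=n-b-e$; this is exactly what collapses an a priori large family of admissible triples $(i,j,\beta)$ to a single matrix. A secondary technical ingredient is the valuation identity $\pord{h-k\al^3}=N$ in case (E), where the hypothesis $p\neq 3$ enters.
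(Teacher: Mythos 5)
Your argument is correct and rests on exactly the same ingredients as the paper's proof --- the characterization of $\iota^{-1}(\pmatord)$ from the proof of Theorem~2.5, the classification of primitive solutions in Lemma~4.2, and the comparison of $p$-adic valuations of the two terms of $x$ to pin down $(i,j,\pord{\beta})$ and then $\al$ modulo $p^{i-\pord{\beta}}$ --- but it transposes the case decomposition: the paper's outer split is $p\mid m$ versus $p\nmid m$ and then the four solution types of Lemma~4.2, with $\beta=0$ versus $\beta\neq 0$ handled inside each subcase, whereas you split first on $\beta=0$ versus $\beta\neq 0$ (and, within $\beta\neq0$, on which order condition (D)/(E) holds) and treat the solution types uniformly through the parameters $(a,b,N)$. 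This buys a somewhat more streamlined presentation at the cost of several asserted ``case checks''; I verified these (e.g.\ that $\min\{a-b+\tfrac{2n+\pord{k}}{3},\,\tfrac{2n+\pord{h}}{3}\}\le i_0$ in all four types, with strict inequality exactly in type (iv) where (D) then contributes nothing) and they all go through. One step you must make explicit, since it is load-bearing for your final count ``$1+1=2$'': a priori (D) and (E) could each contribute one matrix with $\beta\neq0$, which together with the $\beta=0$ family would give $3$. In fact they cannot both contribute: (E) forces $a=b=0$, and then either $N=0$, in which case your own derivation in (E) gives $j=e=n/3$, contradicting the requirement $j<n/3$, or $N>0$, in which case (D) is empty as noted above. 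With that sentence added, the proof is complete and matches the paper's bound.
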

\begin{proof}
In (\ref{lemeq}),~$x$ and~$y$ are integers if and only if
\begin{itemize}
\item $b+e-j\geq 0$, and\\
\item $\pord{p^{a+e-i}U-p^{b+e-j-i}\beta V}\geq 0.$
\end{itemize}
We analyze the condition in two cases,~$p\mid m$ and~$p\nmid m$, appeared in the proof of Theorem 2.5, which are further divided into subcases desired in Lemma 4.2. Throughout the proof, let~$\al=p^{-\pord{\beta}}\beta$.

\begin{mycases}
\case
Suppose~$p\mid m$. Recall that
$$\iota^{-1}(\pmatord)=\left\{\left.\begin{pmatrix} p^i & 0 \\ \beta & p^j \end{pmatrix}\in\mathcal{M}_{p^n}^2\right|(i,j,\beta)\text{ satisfies } (\ref{pm cond})\right\}.$$
\subcase
If~$(a,b,N)=(0,0,0)$, we have~$e=n/3$. Then the condition is
\begin{equation}\label{N=0}
    j\leq \nt{n}\text{ and } \pord{p^{\nt{n}-i}U-p^{-\nt{2n}}\beta V}\geq 0.
\end{equation}
The first condition and (\ref{pm cond}) implies that the only possible~$i,j$ is
$$i=\nt{2n},\, j=\nt{n}.$$
Note that this implies that $\beta=0$ is impossible. Then the second condition is
$$\pord{\beta}=\nt{n} \text{ and } \pord{U-\al V}\geq \nt{n}.$$
Since~$\al<p^{i-n/3}=p^{n/3}$, at most one~$\al$ satisfies the condition.

\subcase
If~$(a,b,N)=(a,0,\pord{h})$ for~$a>0$, we have~$e=(n-\pord{h})/3$. The first condition~$j\leq e$ and (\ref{pm cond}) implies that the only possible~$i,j$ is
$$i=\nt{2n+\pord{h}},\, j=\nt{n-\pord{h}}.$$
If~$\beta\not=0$, the second condition is
$$\pord{p^{a+e-i}U-p^{-i}\beta V}\geq 0\Longleftrightarrow a=\pord{\beta}-e\text{ and }\pord{U-\al V}\geq i-\pord{\beta},$$
since~$\pord{\beta}<i$. As~$a$ determines~$\pord{\beta}$, and~$\al<p^{i-\pord{\beta}}$, there is at most one~$\al$ satisfying the condition. As~$\beta=0$ is possible, there are at most two triples of~$(i,j,\beta)$ which satisfy the condition.
\subcase
If~$(a,b,N)=(0,b,\pord{k})$ for~$b>0$, we have~$e=(n-\pord{k})/3$. First suppose~$\beta=0$. The second condition then gives~$i\leq e$, thus the only possible~$(i,j,\beta)$ is~
$$\left(\nt{n-\pord{k}},\nt{2n+\pord{k}},0\right)$$
by (\ref{pm cond}). Now suppose~$\beta\not=0$. Since~$i>(n-\pord{k})/3=e$ 
 by (\ref{pm cond}), the second condition is
$$\pord{p^{e-i}U-p^{b+e-j-i}\beta V}\geq 0\Longleftrightarrow b=j-\pord{\beta}\text{ and } \pord{U-\al V}\geq i-e.$$
Substituting~$b=j-\pord{\beta}$ in the first condition, we have~$\pord{\beta}\leq e$, so~$\pord{\beta}=e$ by (\ref{pm cond}). Then~$i$ and~$j$ is determined by~$b$, and since~$\al<p^{i-\pord{\beta}}=p^{i-e}$, there is at most one~$\al$ satisfying the condition. Thus for this subcase, there are at most two triples of~$(i,j,\beta)$ which satisfy the condition.
\case
Suppose~$p\nmid m$. Recall that
\begin{equation}\label{mcond}
\pmatord=\left\{\left.\begin{pmatrix} p^i & 0 \\ \beta & p^j \end{pmatrix}\in\mathcal{M}_{p^n}^2\right|(i,j,\beta)\text{ satisfies } (\ref{pm cond})\text{ or }j<\nt{n}, \pord{\beta}=j\text{ and }  \al \text{ satisfies }(\ref{pnm add})\right\}.
\end{equation}
\subcase
If~$(a,b,N)=(0,0,0)$, we have~$e=n/3$ and the condition is same with (\ref{N=0}). If~$j=n/3$, the same discussion with the subcase 1-(i) gives that there is at most one~$\beta$ such that~$(2n/3,n/3,\beta)$ satisfies the condition. If~$j<n/3$, we have~$\pord{\beta}=j$ by (\ref{mcond}), and
$$\pord{p^{\nt{n}-i}U-p^{-\nt{2n}}\beta V}\geq 0\Longleftrightarrow \nt{n}-i=-\nt{2n}+\pord{\beta}\text{ and } \pord{U-\al V}\geq i-\nt{n}.$$
Also, note that
\begin{equation}\label{compare}
\pord{kU^3-hV^3}=\pord{(kU)^3-mV^3}=\pord{k^3(U^3-(\al V)^3)+(k\al^3-m)V^3}.
\end{equation}
As~$(U,V,0)$ is a primitive solution of (\ref{eqn:tm3}), the left part of (\ref{compare}) is 0, while the right part is positive since~$\pord{U-\al V}\geq i-n/3$,~$i>2n/3$, and~$\al$ satisfies (\ref{pnm add}). Thus this is impossible, and there is at most one triple of~$(i,j,\beta)$ satisfying the condition in this subcase.
\subcase
If~$(a,b,N)=(a,0,0)$ for~$a>0$, we have~$e=n/3$. The first condition is~$j\leq n/3$. Suppose~$j=n/3$. Then the second condition is
$$\pord{p^{a-\nt{n}}U-p^{-\nt{2n}}\beta V}\geq 0.$$
If~$\beta\not=0$, this gives 
$$\pord{\beta}=a+\nt{n}\text{ and }\pord{U-\al V}\geq \nt{n}-a.$$
Since~$\al<p^{i-\pord{\beta}}=p^{n/3-a}$, there is at most one~$\al$ satisfying the condition. As~$\beta=0$ is also possible, there are at most two possible cases when~$j=n/3$. Now suppose~$j<n/3$. Then~$\pord{\beta}=j<n/3$ by (\ref{mcond}), so we have
$$\pord{p^{a+\nt{n}-i}U-p^{-\nt{2n}}\beta V}\geq 0\Longleftrightarrow a=\pord{\beta}+i-n\text{ and }\pord{U-\al V}\geq \nt{2n}-j.$$
However, the first part gives~$a=j+i-n=0$, and this contradicts our assumption of the subcase. Thus, there are at most two triples of~$(i,j,\beta)$ satisfying the condition in this subcase.
\subcase
If~$(a,b,N)=(0,b,0)$ for~$b>0$, we have~$e=n/3$. If~$i=n/3$, the only possible~$\beta$ is 0 by (\ref{pm cond}). If~$i>n/3$, the condition is
$$j\leq b+\nt{n},\,\pord{\beta}=j-b\text{ and } \pord{U-\al V}\geq i-\nt{n}.$$
The first and the second part with (\ref{mcond}) and the fact that~$b>0$ gives~$\pord{\beta}=n/3$. Then~$b$ determines~$j$, and as~$\al<p^{i-\pord{\beta}}=p^{i-n/3}$, there is at most one~$\al$ satisfying the condition. Thus, there are at most two triples of~$(i,j,\beta)$ satisfying the condition in this subcase.
\subcase
If~$(a,b,N)=(0,0,N)$ for~$N>0$, the condition is~$$j\leq e\text{ and } \pord{p^{e-i}U-p^{e-n}\beta V}\geq 0.$$
Note that~$e=(n-N)/3<n/3$. The first part and (\ref{mcond}) then implies that~$\pord{\beta}=j<n/3$. In (\ref{compare}), the left part is~$N$, and the condition gives~$\pord{U-\al V}\geq n-j-e\geq n-2e>N$. Hence~$\pord{k\al^3-m}=N$, and since~$\al$ satisfies (\ref{pnm add}), we get~$\pord{\beta}=j=e$. Then since~$\al<p^{i-\pord{\beta}}=p^{i-e}$, there is at most one~$\al$ satisfying the condition.
\end{mycases}
In sum, considering all the cases, there are at most two elements of~$\pmatord$ whose corresponding~$x,y$ in (\ref{lemeq}) are integers.
\end{proof}

By Lemma 4.3, we are ready to obtain the upper bound of the number of elements in~$\pmono$. Combining with Theorem 2.5, we complete the proof of Theorem 1.1.
\begin{thm}
For a sufficiently large integer~$n$ and a prime number~$p\not=2,3$,~$|\pmono|$ is bounded above by~$2g$, where~$g$ is the number of primitive solutions of the Thue-Mahler equation~$$F(U,V)=kU^3-hV^3=\pm p^N.$$
\end{thm}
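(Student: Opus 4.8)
The plan is to combine Lemma 4.3 with the description of the solution set of (\ref{eqn:tm3}) obtained in (\ref{inteq}). Recall that $\pmono\subseteq\pmatord$, and that, writing the $g$ primitive solutions of (\ref{eqn:tm3}) as $(p^{a_t}U_t,p^{b_t}V_t,N_t)$ for $t\in\{1,2,\ldots,g\}$, a matrix $M=\begin{pmatrix}p^i&0\\ \beta&p^j\end{pmatrix}\in\iota^{-1}(\pmatord)$ lies in $\iota^{-1}(\pmono)$ if and only if there exist $t\in\{1,2,\ldots,g\}$ and $e\in\ZP$ with $n=N_t+3e$ such that the two numbers of (\ref{inteq}) attached to that primitive solution are integers.

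First I would use the finiteness part of Mahler's theorem (Theorem 4.1): since $g<\infty$, the number $\max_{1\leq t\leq g}N_t$ is finite, so it is legitimate to restrict attention to integers $n$ with $n>\max_{1\leq t\leq g}N_t$; this is the only place where the hypothesis ``$n$ sufficiently large'' is used. Fix such an $n$. For each $t\in\{1,2,\ldots,g\}$, either $n\not\equiv N_t\pmod 3$, so that no $e\in\ZP$ satisfies $n=N_t+3e$ and the index $t$ produces no element of $\pmono$; or $n\equiv N_t\pmod 3$, in which case $e=(n-N_t)/3$ is a uniquely determined nonnegative integer and, since $n>N_t$, Lemma 4.3 applies verbatim to $(p^{a_t}U_t,p^{b_t}V_t,N_t)$ and shows that at most two elements of $\pmatord$, hence at most two elements of $\pmono$, have their associated $x,y$ of (\ref{lemeq}) integral.

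Next I would invoke the equivalence stated above: every element of $\pmono$ arises from at least one index $t$ with $n\equiv N_t\pmod 3$. Summing the bound of Lemma 4.3 over those indices gives
$$|\pmono|\leq 2\cdot\#\{t\in\{1,2,\ldots,g\}:n\equiv N_t\pmod 3\}\leq 2g,$$
which is the assertion of the theorem. To deduce Theorem 1.1 afterwards I would observe that the form $F(U,V)=kU^3-hV^3$, and hence $g$, is independent of $n$, so that summing over indices $p^i$ with $0\leq i\leq n$ — the finitely many small $i$ outside the range where the above applies contributing only a bounded amount — yields $\monnum=O(n)$, while Theorem 2.5 supplies the matching lower bound $[n/3]\leq\log_p\ornum$.

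I do not expect any serious difficulty in the present theorem itself: once Lemma 4.3 is granted, the argument is a short counting step, and the only point requiring care is making the threshold ``$n$ sufficiently large'' explicit so that Lemma 4.3 can be applied to all finitely many primitive solutions simultaneously. The real work sits in Lemma 4.3, whose proof splits according to whether $p\mid m$ and then according to the four shapes of primitive solution listed in Lemma 4.2, each time exploiting the non-archimedean behaviour of $\nu_p$ together with the inequality $\nu_p(\beta)<i$ coming from $\beta<p^i$.
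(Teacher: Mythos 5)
Your proposal is correct and follows essentially the same route as the paper: restrict to $n>\max_t N_t$, apply Lemma 4.3 to each of the $g$ primitive solutions (with the unique $e=(n-N_t)/3$ when the congruence $n\equiv N_t\pmod 3$ holds), and sum to get $|\pmono|\leq 2g$. Your explicit handling of the mod-$3$ condition and the uniqueness of $e$ is a slightly more careful write-up of exactly the counting step the paper performs.
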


\begin{proof}
Let~$n$ be sufficiently large so that~$n>N_t$ for all~$1\leq t\leq g$. For each~$t$, the number of elements in~$\pmatord$ such that the corresponding triples~$(i,j,\beta)$ making the solution~$x,y$ of (\ref{inteq}) be integers is at most 2 by Lemma 4.3. Thus there are at most~$2g$ elements of~$\pmatord$ which is contained in~$\pmono$.
\end{proof}

\begin{proof}[Proof of Theorem 1.1]
Since $\ornum=\sum_{k=0}^n|\mathcal{M}_{p^k,K}|$, Theorem 2.5 implies that
$$p^{\left[\nt{n}\right]}\leq \ornum\leq p^n.$$
Moreover, Theorem 4.4 implies that $\monnum=O(n)$ since $\monnum=\sum_{k=0}^n|\mathcal{M}_{p^k,K}^{\operatorname{mono}}|$.
\end{proof}

\bibliographystyle{plain}
\bibliography{MinchanKangorder}

\end{document}